\theoremstyle{plain}
\newtheorem{theorem}{Theorem}
\newtheorem{lemma}[theorem]{Lemma}
\theoremstyle{definition}
\newtheorem{definition}{Definition}
\newtheorem{assumption}{Assumption}
\DeclareMathOperator{\tr}{tr}
\DeclareMathOperator{\ent}{\mathcal{S}}
\DeclareMathOperator{\R}{\mathbb{R}}
\DeclareMathOperator{\rank}{\mathrm{rank}\,}
\DeclareMathOperator*{\argmax}{argmax}
\newcommand{\schatteneps}[2]{\|#1\|_{S_{#2,\varepsilon}}}
\newcommand{\bS}{\mathbb{S}}
\newcommand{\bR}{\mathbb{R}}
\newcommand{\cmmnt}[1]{}
\pgfplotsset{width=11cm,height=7cm}
\title{Entropy-Penalized Semidefinite Programming}
\author{
Mikhail Krechetov$^1$\footnote{Contact Author}\and
Jakub Mare\v{c}ek$^2$\and 
Yury Maximov$^{1,3}$\And
Martin Tak\'a\v{c}$^4$\\
\affiliations
$^1$ Skolkovo Institute of Science and Technology, Nobel Street 3, Moscow, Russia \\
$^2$ IBM Research -- Ireland, Technology Campus Damastown, Dublin D15, Ireland\\
$^3$ Los Alamos National Laboratory, MS-B284, Los Alamos, NM 87545, USA \\ 
$^4$ Lehigh University, 200 West Packer Avenue, Bethlehem, PA 18015, USA
\emails
mikhail.krechetov@skoltech.ru,
jakub.marecek@ie.ibm.com,
yury@lanl.gov,
mat614@lehigh.edu
}
\begin{document}
\maketitle
\begin{abstract}
Low-rank methods for semi-definite programming (SDP) have gained a lot of interest recently, especially in machine learning applications. Their analysis often involves determinant-based or Schatten-norm penalties, which are difficult to implement in practice due to high computational efforts. In this paper, we propose Entropy-Penalized Semi-Definite Programming (EP-SDP)\footnote{See  https://github.com/mkrechetov/epsdp for the implementation details.}\!\!, which provides a unified framework for a broad class of penalty functions used in practice to promote a low-rank solution. We show that EP-SDP problems admit an efficient numerical algorithm, having (almost) linear time complexity of the gradient computation; this makes it useful for many machine learning and optimization problems. We illustrate the practical efficiency of our approach on several combinatorial optimization and machine learning problems.

%
%
\end{abstract}

\section{Introduction}

Semidefinite programming (SDP) has become a key tool in solving numerous problems across operations research, machine learning, and artificial intelligence. While there are too many applications of SDP to present even a representative sample, inference in graphical models \cite{ExpFamilies,erdogdu2017inference}, multi-camera computer vision \cite{torr2003solving}, 
and applications of polynomial optimization \cite{Parrilo,lasserre2015introduction}   
in power systems \cite{7024950}
stand out. Under some assumptions \cite{Madani2014}, the rank at the optimum of the SDP relaxation is bounded from above by the tree-width of a certain hypergraph, plus one. When a rank-one solution is not available, it is often not needed \cite{Marecek2017}, as one should like to construct a stronger SDP relaxation.


%
%
Penalization of the objective is a popular approach for obtaining low-rank solutions, at least in theory  \cite{recht2010guaranteed,Lemon,zhou19dis,Fawzi2019}. 
Notice that without a further penalization, an interior-point method for SDP provides a solution on the boundary of the feasible set, where SDP corresponds to the optimum of the highest rank, whenever there are optima of multiple ranks available. 
The use of a penalization provides a counter-balance in this respect. 
In practice, however, the penalties are often ignored, as it is believed that their computation is too demanding for large-scale problems and does not guarantee low-rank solutions in general.


An alternative approach develops numerical optimization methods that seek \emph{a priori} low-rank solutions. 
This approach, widely attributed to \cite{BurerMonteiro}, considers a factorization of a semidefinite matricial variable $X = V\cdot V^\top$ with $V\in \R^{n\times k}$ for increasing $1 \le k \ll n$. In general, the resulting problems are  non-convex.
Early analyses required determinant-based penalty terms \cite{burer2005local}, although no efficient implementations were known.
Under mild assumptions, for large-enough $k$, 
there is a unique optimum over such a factorization even without a penalization and it recovers the optimum of the initial SDP problem \cite{boumal2016non}.
For smaller values of $k$, it is known that 
the low-rank relaxation achieves $\mathcal O(1/k)$ relative error \cite{Montanari}. Much more elaborate analyses \cite{erdogdu2018convergence} are now available.
Especially when combined with efficient gradient computation, e.g. within low-rank coordinate  descent \cite[e.g.]{Marecek2017}, this approach can tackle sufficiently large instances and is increasingly popular. 


In this paper, we aim to develop a method combining both approaches, i.e., utilize an efficient low-rank-promoting penalty in the Burer-Monteiro approach. We present efficient first-order numerical algorithms for solving the resulting penalized problem, with (almost)
 linear-time per-iteration complexity. This makes the combined approach applicable to a wide range of practical problems. 

In a case study, we focus on certain combinatorial optimization problems and inference in graphical models. 
We show that despite the non-convexity of the penalized problem, our approach successfully recovers rank-one solutions in practice.
We compare our solutions against  non-penalized SDP, belief propagation, and state-of-the-art branch-and-bound solvers of~\cite{krislock2014improved}.

\paragraph{Contribution.} Our contributions can be summarized as follows. We show
\begin{enumerate}
\item  convergence properties of optimization methods employing a wide class of penalty functions that promote low-rank solutions; 
\item linear-time algorithms for computing the gradient of these penalty functions; 
\item computational results on the penalized SDP relaxation of maximum a posteriori (MAP) estimates in Markov random fields (MRF), which considerably improve upon the results obtained by interior-point methods and randomized rounding.
\end{enumerate}

This allows for both well-performing and easy-to-analyze low-rank methods for SDPs coming from graphical models, combinatorial optimization, and machine learning. 

\paragraph{Paper structure.} This paper is organized as follows. First, we define the conic optimization problem together with a penalized form with a list of suitable penalization functions. Next, we present theoretical guarantees for solution recovery. These extend known results for solution recovery to the penalty case. Then, we consider the MAP problem in Markov Random Fields (MRF) and introduce an iterative procedure for it, together with a first-order method for solving a subproblem at each step; we also show how to compute the gradients efficiently.  Finally, we provide computational experiments for different inference problems in MRF.

\section{Background}\label{sec:setup}

SDP is the following conic optimization problem:
\begin{align}
\min_{X\in \bS^n_{+}} \ & 
  \sum_{i \in I}
   f_i( \tr XS_i ) 
   \tag{SDP}
   \label{eq:sdp}
   \\ 
  \text{s.t.:\;} & g_j( \tr X C_j ) \leq 0, \qquad j \in J, 
\notag
\end{align}
where 
$X\in \bS^n_{+}$ denotes that the $n \times n$ matrix variable $W$ is symmetric positive semidefinite,
$I$ and $J$ are finite index sets, 
each $f_i$ and $g_j$ are convex functions $\R^{n\times n} \to \R$, and $C_j \in R^{n\times n}$  and $S_i\in \R^{n\times n}$  are constant matrices.

In the context of combinatorial optimization, one may also consider even more powerful methods such as Sum-of-Squares hierarchies of \cite{Parrilo}.
However, even an SDP relaxation, which is, in fact, the first step of this hierarchy, may be too computationally challenging. It is usually solved by interior-point methods in the dimension that is quadratic in the number of variables and thus becomes intractable even for medium-scale problems (with a few thousand variables). The problem becomes even less scalable for higher orders of the hierarchy since it requires one to solve SDP with $n^{\Theta (d)}$ variables, where $d$ is the level of the hierarchy.

\subsection{Low-rank Relaxations and Penalized Problem}
First, let us formally define our notion of a penalty function and explain related work on first-order
methods for SDP.

\begin{assumption}
Eq.~\eqref{eq:sdp} has an optimum solution with rank~$r$.
\end{assumption}

Let us consider the following proxy problem:
\begin{align}
P_{q,\lambda} \doteq & \min_{V\in R^{n\times q}}
  \sum_{i \in I}
   f_i( \tr(V^\top S_iV) )
   +  R_{q,\lambda}(V) 
   \label{eq:prox} 
   \tag{P-SDP}\\
& \;\text{s.t.: } g_j( \tr(V^\top C_j V) ) \leq 0, \; j \in J.
\nonumber
\end{align}
Where $q > r$ and $R_{q,\lambda}(V)$ satisfies the following:

\begin{definition}[Strict penalty function]\label{def:prop}
A function $\mathcal{R}_{q,\lambda}(V): \R^{n \times n} \to \R$ 
is a \emph{penalty function} that promotes low-rank solutions if for some integers $q' \le q$ and a multiplier $\lambda \in \R^+$: 
\begin{align*}
\lim_{\lambda \to \infty} \mathcal{R}_{q,\lambda}(V)
= \begin{cases}
\; \textrm{0} & \; \mbox{ if } \rank(V) < q',\\
\; \infty & \; \mbox{ if } \rank(V) = q.
\end{cases}
\end{align*}
Moreover, if $q' = q$ and the 
$\rank(X) < q$, then $\mathcal{R}_{q,\lambda}(V) = 0, \ \forall \lambda$, $R(V)$ is a \emph{strict penalty function}.
\end{definition}

We use the word {\it penalty} instead of penalty function that promotes low-rank solutions, where there is no risk of confusion. This notion of a penalty is rather wide. When multiplied by $\lambda$, a determinant is a prime example. One may also consider functions of the following quasi-norms.
\begin{enumerate}
\item The nuclear norm: 
\begin{equation}
\|X\|_* = \sum_i\sigma_i,
\label{eq::trace_norm}
\end{equation}
where $\sigma_i$ is the $i$-th singular value, cf. \cite{Lemon}. 
The norm is also known as a trace norm, Schatten 1-norm, and Ky Fan norm. 
As shown by \cite{Srebro}, in the method of \cite{BurerMonteiro}, one can benefit from a bi-Frobenius reformulation:
\begin{align*}
\|X\|_{*} & = \min_{\substack{U\in\mathbb{R}^{n\!\times\! d} \\ V\in\mathbb{R}^{n\!\times\! d}:X=UV^{T}}}\|U\|_{F}\|V\|_{F} 
\\ & = \min_{U,V:X=UV^{T}}\frac{\|U\|^{2}_{F}+\|V\|^{2}_{F}}{2}.
\end{align*}
There are also truncated \cite{HuTruncated} and capped variants \cite{SunTruncated}.
\item Schatten-${p}$ quasi-norm for $p > 0$:
\begin{align}
\|X\|_{S_{p}}=\left(\sum^{n}_{i=1}\sigma^{p}_{i}(X)\right)^{1/p},
\end{align}
where $\sigma_{i}(X)$ denotes the $i$-th singular value of $X$.

\item A smoothed variant of Schatten-${p}$ quasi-norm by \cite{Pogodin} for $p, \epsilon > 0$:
\begin{equation}
\schatteneps{X}{p}^p = \sum\limits_{i =  1}^{n} \left(\sigma_i^2 + \varepsilon\right)^{\frac{p}{2}} = \tr\left(X^\top X+\varepsilon I\right)^{\frac{p}{2}}.
\label{eq::schatten_norm_smoothed}
\end{equation}

\item Tri-trace quasi-norm of \cite{Shang2018}:
\begin{align}
\|X\|_{\textup{Tri-tr}}
=
\min_{X=UV\Upsilon^\top}\|U\|_{*}\|V\|_{*}\|\Upsilon\|_{*},
\end{align}
which is also the Schatten-1/3 quasi-norm.
\item Bi-nuclear (BiN) quasi-norm of \cite{Shang2018}:
\begin{align}
\|X\|_{\textup{BiN}}
=
\min_{X=UV^{T}}\|U\|_{*}\|V\|_{*},
\end{align}
which is also the Schatten-${1/2}$ quasi-norm.
\item Frobenius/nuclear quasi-norm of \cite{Shang2018}:
\begin{align}
\|X\|_{\textup{F/N}} =
\min_{X=UV^{T}}\|U\|_{*}\|V\|_{F},
\end{align}
which is also the Schatten-${2/3}$ quasi-norm.
\end{enumerate}

We also note there has been considerable interest in the analysis of low-rank approaches without penalization, especially in matrix-completion applications. Much of the analysis goes back to the work of \cite{keshavan2010matrix}. For further important contributions, see \cite{arora2012computing}. 

\subsection{Entropy Viewpoint}

One could see the penalty functions introduced above from the entropy-penalization perspective. 
This is useful not only from a methodological standpoint, but also from a computational one. 
To this end, we consider the Tsallis entropy:
\[
\ent^T_\alpha(X) = \frac{1}{1-\alpha} \left( \frac{\tr{X^\alpha}}{(\tr{X})^\alpha} - 1\right). 
\]

The Tsallis entropy is crucial in our study because it generalizes many popular penalties considered earlier. 
The Schatten $p$-norm 
coincides with the Tsallis entropy $\ent^T_p$ over a set of matrices with a fixed trace norm, so that the tri-quasi norm and bi-nuclear norm (2--6) are covered as well. 
The Log-Det function, $- \log \det X$, which is also used in low-rank SDP, 
is up to an additive constant factor relative (Shannon) entropy taken concerning a unit matrix, 
while Renyi ($\ent^R$) and von Neumann ($\ent^N$) entropies, 
\begin{align*}
& \ent^R_\alpha(X) = \frac{\log \tr (X/\tr X)^\alpha}{1-\alpha} \text{ and }\\
& S_\alpha^N(X) = - \tr(\log(X/\tr X) \cdot X/\tr(X)), 
\end{align*}
respectively, can also be used as penalties to promote a low-rank solution. To the best of our knowledge, neither Renyi, von Neumann, nor Tsallis entropies have been studied in the context of low-rank SDP. 

\section{Exact Recovery}\label{sec:theory}

Let us now present a unified view of the penalties and their properties:
\begin{lemma}
Any of:
\begin{enumerate}
\item $\lambda \det ( X )$;
\item $\lambda \sigma_{q}(X)$, where $\sigma_{i}(X)$ denotes the $i$-th singular value of $X$;
\item Tsallis, Renyi, and von Neumann entropies defined on the last $n-q+1$ singular values;
\item $\lambda \max \left\{ 0, \frac { \|X\|_{*} }{ \max \{ \sigma_{\min}(X), \sigma_{q}(X) \} } - q \right\},$
\end{enumerate}
is a penalty function that promotes low-rank solutions.
Moreover, penalties 1--3 are strict.
\label{lem1}
\end{lemma}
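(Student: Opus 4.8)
The plan is to reduce every case to the behaviour of the eigenvalues $\sigma_1(X)\ge\cdots\ge\sigma_n(X)\ge 0$ of $X=VV^\top$, using that $\rank(X)$ equals the number of nonzero $\sigma_i(X)$ and that the Burer--Monteiro factorization forces $\rank(X)\le q$. With this reduction, verifying Definition~\ref{def:prop} amounts to checking two things for each candidate: that, viewed as a function of the spectrum, it diverges under $\lambda\to\infty$ exactly when $\sigma_q(X)>0$ (i.e.\ $\rank(X)=q$), and that it tends to $0$ otherwise. For the strict claim one additionally checks that it is identically $0$ for every $\lambda$ on the set $\{\rank(X)<q\}$, which is what forces $q'=q$.

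First I would dispatch penalties~1 and~2, which are essentially immediate. Reading $\det$ as the product of the nonzero singular values (equivalently $\det(V^\top V)$), it equals $\prod_{i=1}^{q}\sigma_i(X)$, which is strictly positive when $\rank(X)=q$ and is $0$ as soon as $\rank(X)<q$; hence $\lambda\det(X)\to\infty$ on $\{\rank(X)=q\}$ and is $\equiv 0$ on $\{\rank(X)<q\}$, giving a strict penalty with $q'=q$. The same dichotomy holds verbatim for $\sigma_q(X)$, since $\sigma_q(X)>0$ if and only if there are at least $q$ nonzero singular values; multiplying by $\lambda$ again yields strictness with $q'=q$.

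The main obstacle is penalty~3. Here I would exploit the Schatten--Tsallis correspondence recorded in Section~\ref{sec:setup}: restricted to the last $n-q+1$ singular values $(\sigma_q,\dots,\sigma_n)$, each of the Tsallis, R\'enyi and von Neumann functionals is, up to the trace normalization $(\tr X)^{\alpha}$ and the constant $1/(1-\alpha)$, a monotone transform of a Schatten quasi-norm of that tail. On the feasible set $\rank(X)\le q$ the tail reduces to $(\sigma_q,0,\dots,0)$, so the penalty becomes a function of $\sigma_q(X)$ alone: it is strictly positive when $\sigma_q(X)>0$ and is $0$ (under the usual $0\log 0=0$ and $0^{\alpha}=0$ conventions, valid since $\alpha>0$) when $\sigma_q(X)=0$. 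The delicate points I expect to spend effort on are (i) handling the normalization and the sign of $1/(1-\alpha)$ so that the functional is nonnegative and diverges rather than collapses, (ii) the degenerate limit $\tr X\to 0$, and (iii) confirming continuity of the tail entropy at the spectral boundary so that the limit in $\lambda$ is legitimate. Establishing strict positivity on $\{\rank(X)=q\}$ and exact vanishing on $\{\rank(X)<q\}$ then yields a strict penalty with $q'=q$.

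Finally, penalty~4 I would treat separately, as it is asserted only to be a (non-strict) penalty. Reading $\sigma_{\min}(X)$ as the smallest nonzero singular value, the ratio $\|X\|_*/\max\{\sigma_{\min}(X),\sigma_q(X)\}=\sum_i\sigma_i(X)\big/\max\{\sigma_{\min}(X),\sigma_q(X)\}$ is a standard effective-rank surrogate. When $\rank(X)=q$ the denominator is $\sigma_q(X)$ and the ratio is $\ge q$ with equality only for a flat spectrum, so $\max\{0,\,\cdot-q\}$ is positive generically and $\lambda$ drives it to $\infty$. When $\rank(X)<q$, however, the same ratio may still exceed $q$ for sufficiently spread spectra, so the clipped quantity need not vanish identically; this is precisely why penalty~4 is a penalty for a suitable $q'\le q$ yet fails the strict property. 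I would therefore close by exhibiting a threshold $q'$ below which the clipped ratio is forced to $0$, and note that no analogue of the identically-zero property enjoyed by~1--3 is available here.
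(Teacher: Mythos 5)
Your overall strategy --- reduce each candidate to a function of the ordered singular values of $X=VV^\top$ and verify the two limit conditions of Definition~\ref{def:prop} case by case --- is exactly the route the paper takes; its proof is a short sketch doing the same spectral case analysis, and your handling of items 1, 2 and 4 tracks it closely (the paper likewise observes that $\|X\|_{*}/\sigma_{\min}(X)$ equals $\rank(X)$ for a flat spectrum and is otherwise only an upper bound on the rank, which is why the fourth penalty is guaranteed only for some $q'\le q$ and is not strict). Your plan to exhibit an explicit threshold $q'$ for item 4 is a genuine, if small, improvement on the paper, which leaves that step implicit.

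The one step that would fail as written is your claim in item 3 that the tail entropy is ``strictly positive when $\sigma_q(X)>0$.'' The tail $(\sigma_q,\dots,\sigma_n)$ of a rank-$q$ matrix has exactly one nonzero entry, and the paper's Tsallis, R\'enyi and von Neumann entropies are all trace-normalized: the entropy of a pure state $(\sigma_q,0,\dots,0)$ is identically $0$ for every $\sigma_q>0$ (for Tsallis, $\tr X^{\alpha}/(\tr X)^{\alpha}=1$, so $\ent^T_\alpha=0$; similarly for the other two). Hence, with the normalization in place, the penalty does not diverge at $\rank(X)=q$ and the $\lim_{\lambda\to\infty}=\infty$ half of Definition~\ref{def:prop} is not established; one must either drop the normalization (use the raw Schatten tail sum) or shift the tail so that a rank-$q$ matrix contributes at least two nonzero entries. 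You correctly flag the normalization as a ``delicate point,'' but the plan asserts the positivity before resolving it, and the delicate point is precisely where the assertion breaks. In fairness, the paper's own proof of item 3 is a single sentence (``the entropy of the zero vector is zero'') that establishes only the vanishing/strictness half and is silent on divergence, so your sketch is no less complete than the published one --- but neither closes this gap. The same caveat applies in miniature to your item 4: ``positive generically'' does not meet the definition, which demands divergence for \emph{every} rank-$q$ matrix, and a flat rank-$q$ spectrum yields penalty exactly $0$.
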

\begin{proof} \emph{Sketch}.
(1.) The proof is by simple algebra.
(2.) If $\sigma_{q}(X)$ is 0, we know the rank is $q - 1$ or less. Otherwise, for large values of $\lambda$, the value of the penalties goes to infinity, and hence $q' = q$.
(3.) The definition of entropy assumes that $S(0, ..., 0) = 0$, thus all entropies are strict penalty functions by definition. 
(4.) First, consider the case where all non-zero singular values are equal. In that case, $ \|X\|_{*} / \sigma_{\min}(X) = \rank(X)$, and subtracting $q$ results either in a non-positive number when the rank is less than $q$ or a positive number otherwise.
If the singular values are non-equal, $ \|X\|_{*} / \sigma_{\min}(X) $ provides an upper bound on the rank of $X$, which can be improved as suggested. The use of the upper bound results in the value of the penalty tending to infinity for values between $q'$ and $q$ in the large limit of $\lambda$. 
\end{proof}

Crucially, under mild assumptions, any penalty allows for the recovery of the optimum of a feasible instance of \eqref{eq:sdp} from the iterations of an algorithm on the non-convex problem in variable $V \in \R^{n \times r}$, such as in the methods of \cite{lowrankMAP} or \cite{BurerMonteiro}. In contrast to the traditional results of \cite{burer2005local}, \cmmnt{,burer2005local} who consider the $\det$ penalty, we allow for the use of any strict penalty function. 

\begin{theorem}
Assume that we solve the proxy problem \eqref{eq:prox} iteratively and $\mathcal{R}_{q,\lambda}(V)$ is a strict penalty function that promotes low-rank solutions. In each iteration, if $\mathcal{R}_{q,\lambda}(V) \neq 0$, we increase $\lambda $ (e.g., set $\lambda_{t+1} = \gamma \lambda_t$, with $u > 1$ as some fixed parameter). Furthermore, let us assume that the solution we found is denoted by $\tilde V_{q}$ with $\rank(\tilde V_{q})= q' < q$. Let us also denote $\tilde V_{q'} \in \R^{n \times q'}$ some factorization of $\tilde V_{q} \tilde V_{q}^\top$ (such factorization exists because $\rank(\tilde V_{q})= q'$). Also assume that we have an optimal solution of \eqref{eq:sdp}, $X^*$ with a~$\rank(X^*) = r$.

If 
\begin{equation}
V_{q'+r+1} \triangleq [ \tilde V_{q'}, {\bf 0}_{n\times r}, {\bf 0}_{n\times 1}]
\end{equation}
is a local minimum of $P_{q'+r+1,\lambda}$, then 
$(\tilde V_{q'}) \tilde V_{q'}^\top$ is a global solution of \ref{eq:sdp}.
\label{strict}
\end{theorem}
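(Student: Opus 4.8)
The plan is to certify global optimality of $Y \doteq \tilde V_{q'}\tilde V_{q'}^\top$ for the convex program \eqref{eq:sdp} by exhibiting a dual slack matrix and checking the KKT conditions; since both the objective $\sum_i f_i(\tr X S_i)$ and the constraints $g_j$ are convex, \eqref{eq:sdp} is a convex conic program, so KKT is sufficient for global optimality. The starting observation is that $Y = V_{q'+r+1}V_{q'+r+1}^\top$ has rank $q'$, so $V_{q'+r+1}$ carries $r+1$ zero columns. Because $\mathcal R_{q'+r+1,\lambda}$ is a strict penalty, it vanishes identically on the set of $V$ with $\rank(V) < q'+r+1$; every perturbation used below keeps the rank at most $q'+1 < q'+r+1$ (using $r \ge 1$; the case $r=0$ forces $X^*=0$ and is trivial), so the penalty stays exactly zero and contributes nothing to the first- or second-order optimality conditions. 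This is precisely the role of the ``$+r+1$'' padding: it guarantees enough slack columns that the penalized problem behaves locally like the unpenalized Burer--Monteiro factorization.

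First I would write the first-order (KKT) conditions for the local minimum $V_{q'+r+1}$ of the constrained problem $P_{q'+r+1,\lambda}$. Let $\mu_j \ge 0$ be multipliers for the active constraints and set $M \doteq \sum_{i\in I} f_i'(\tr(Y S_i))\,S_i + \sum_{j\in J}\mu_j\, g_j'(\tr(Y C_j))\,C_j$ (symmetrizing $S_i, C_j$ without loss of generality). Stationarity of the Lagrangian in $V$ reads $2 M V_{q'+r+1} + \nabla_V \mathcal R = 0$; since $\nabla_V \mathcal R = 0$ along the rank-deficient directions at hand, this gives $M V_{q'+r+1} = 0$, hence $M Y = 0$ and $\langle M, Y\rangle = \tr(V_{q'+r+1}^\top M V_{q'+r+1}) = 0$.

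Next I would extract positive semidefiniteness of $M$ from a single zero column --- the classical escape-direction argument. Pick a zero-column index $k$ and perturb $V(\epsilon) = V_{q'+r+1} + \epsilon\, u\, e_k^\top$ for an arbitrary unit vector $u$. Because the $k$-th column of $V_{q'+r+1}$ vanishes the cross terms cancel, so $V(\epsilon)V(\epsilon)^\top = Y + \epsilon^2 u u^\top$ exactly; moreover the first-order change of every constraint argument $\tr(V^\top C_j V)$ is zero, so $E = u e_k^\top$ lies in the critical cone. The second-order necessary condition at the local minimum then yields $\tfrac{d^2}{d\epsilon^2}\big|_{0}\, L(V(\epsilon),\mu) = 2\,u^\top M u \ge 0$, and as $u$ is arbitrary, $M \succeq 0$.

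Collecting the pieces, $(Y,\mu,M)$ satisfy primal feasibility, $\mu\ge 0$ with complementary slackness on the $g_j$, $M\succeq 0$, and $\langle M, Y\rangle = 0$ --- exactly the KKT system of the convex program \eqref{eq:sdp} --- so $Y$ is a global minimizer, as claimed. I expect the main obstacle to be rigor in the two places where the penalty and the constraints enter: justifying that the strict penalty contributes nothing to the first- and second-order conditions (which rests on the penalty being not merely zero at $V_{q'+r+1}$ but identically zero on a neighborhood within the rank-deficient stratum, so its directional derivatives along the admissible perturbations vanish), and verifying that $u e_k^\top$ genuinely belongs to the critical cone so the standard second-order necessary condition applies. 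The interpolation picture --- moving $Y$ toward $X^*$ along $(1-s)Y + sX^*$, whose rank never exceeds $q'+r$ and is therefore realizable in the first $q'+r$ columns with the last column free --- explains geometrically why $q'+r+1$ is the correct amount of padding and can serve as an alternative to the second-order argument, at the cost of handling continuity of the factorization.
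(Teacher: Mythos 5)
Your route is genuinely different from the paper's. The paper argues by contradiction along the explicit feasible path $V(\tau)=[\sqrt{\tau}\,\tilde V_{q'},\sqrt{1-\tau}\,V_*,\mathbf{0}_{n\times 1}]$, whose Gram matrix is the convex combination $\tau\,\tilde V_{q'}\tilde V_{q'}^\top+(1-\tau)X^*$: every $V(\tau)$ has rank at most $q'+r<q'+r+1$, so the strict penalty is identically zero along the whole path, convexity of the $g_j$ gives feasibility, and convexity of the $f_i$ makes the objective strictly smaller than at $\tau=1$ whenever $X^*$ is strictly better, contradicting local minimality of $V(1)=V_{q'+r+1}$. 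This is exactly the ``interpolation picture'' you relegate to a closing remark; it needs no differentiability of $f_i$, $g_j$, or $\mathcal{R}$, no multipliers, and no constraint qualification, which matters because the theorem assumes only convexity. Your main route (Lagrangian stationarity plus the zero-column escape direction to certify $M\succeq 0$, then sufficiency of KKT for the convex problem \eqref{eq:sdp}) is the classical Burer--Monteiro second-order argument; it would prove a stronger statement (one spare zero column suffices, so the $r$ extra columns are never used), but only under regularity the paper does not assume.

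The concrete gap is the step $\nabla_V\mathcal{R}=0$. Definition~\ref{def:prop} only forces a strict penalty to vanish on the determinantal variety $\{V:\rank(V)<q\}$, which has empty interior in $\R^{n\times q}$; being identically zero on the rank-deficient stratum controls directional derivatives only along that stratum, not in rank-increasing directions, and several of the paper's penalties ($\sigma_q(X)$, the entropies, $\log\det$) are non-differentiable or singular exactly at rank-deficient matrices. The definition does not even guarantee $\mathcal{R}_{q,\lambda}\ge 0$ at finite $\lambda$, so you cannot argue ``global minimizer of the penalty, hence vanishing gradient.'' The same smoothness issue affects $f_i'$ and $g_j'$ (only convexity is assumed), and both the existence of multipliers and the applicability of the second-order necessary condition on your critical cone require a constraint qualification you have not verified. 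Finally, the case $r=0$ is not trivial for your argument: there $q'+1=q'+r+1$, your perturbation can reach full column rank so the penalty need not vanish along it, yet the conclusion still has content. All of these difficulties are avoided by the interpolation argument, which is why the paper pads by $r+1$ columns rather than by one; if you intend to keep the KKT route, you must add explicit smoothness and regularity hypotheses, or else promote your final remark to the actual proof.
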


\begin{proof}
Let us define a family of matrices for $\tau \in [0,1]$ as follows:
\begin{align*}
V(\tau) \triangleq [ \sqrt{\tau} \tilde V_{q'}, \sqrt{(1-\tau)} V_* , {\bf 0}_{n\times 1}],
\end{align*}
where $ (V_*)^\top (V_*)$ is some factorization of $X^*$ with $V_* \in \R^{n \times r} $.

Note that $\forall \tau$, we have $\rank(X(\tau)) < r+q'+1$, and hence
$\forall \lambda, \tau:
R_{q'+r+1, \lambda}(V(\tau)) = 0$.
Now, assume the contradiction, that is, $V_{q'+r+1}$ is a local optimum solution but $\tilde V_{q'}$ is not a global solution.

We show that $\forall \tau \in [0,1]$, $V(\tau)$ is a feasible solution. Indeed, for any $j \in J$ we have
\begin{align*}
    & g_j(\tr(V(\tau)^T C_j \tr(V(\tau)) \leq \\
    & \quad \tau g_j(\tr( [ \tilde V_{q'}, {\bf 0}_{n\times r+1}]^\top C_j [ \tilde V_{q'}, {\bf 0}_{n\times r+1}])) + \\
    & \quad(1-\tau) \tr( [  {\bf 0}_{n\times q'}, V_*,{\bf 0}_{n\times 1}]^\top C_j [  {\bf 0}_{n\times q'}, V_*,{\bf 0}_{n\times 1}])) = \\
    & \hspace{20mm} \tau g_j(\tr(  \tilde V_{q'} ^\top C_j   \tilde V_{q'} )) + (1-\tau) \tr(  X^* C_j  )) \leq  0.
\end{align*}

We just showed that for each $V(\tau), \tau \in [0,1]$ is a feasible point. Now, let us compute the objective value at this point. 
For all $ \tau \in [0,1]$, we have
\[
  \sum_{i \in I}  \tr( V(\tau)^\top S_i V(\tau)) \leq  \tau \sum_{i \in I}
    \tr( \tilde V_{q'}^\top S_i \tilde V_{q'} ) 
   +
\]
\[
 +
 (1-\tau)  
 \sum_{i \in I} \tr( X^* S_i  ) <  \sum_{i \in I} \tr( \tilde V_{q'}^\top S_i \tilde V_{q'} ),
\]
which is a contradiction under the assumption that 
$\tilde V_q$ is a local optimum.
\end{proof}

\section{Efficient Implementation for MAP Inference
}\label{sec:algo}

A pairwise Markov Random Field (MRF) is defined for an arbitrary graph $G = (V, E)$ with $n$ vertices. We associate a binary variable $x_i\in \{-1, +1\}$ with each vertex $i\in V$. Let $\theta_i: \{\pm 1\}\to \bR$ and  $\theta_{ij}: \{\pm1\}^2 \to \bR$ defined for each vertex and edge of the graph be vertex and pairwise potential, respectively. Thus, \emph{a posteriori} distribution of $x$ follows the Gibbs distribution: 
\[
p(x|\theta) = \frac{1}{Z(\theta)} e^{U(x|\theta)},
\]
with $U(x;\theta) = \sum_{i\in V} \theta_i(x_i) + \sum_{(i,j)\in E} \theta_{ij}(x_i, x_j).$
The maximum \emph{a posteriori} (MAP) estimate is then 
\[
\hat{x} = \argmax\limits_{x\in \{-1, 1\}^n} p(x|\theta) = \argmax\limits_{x\in \{-1, 1\}^n} U(x;\theta),
\tag{MAP}
\]
which is its turn an NP-hard binary quadratic optimization problem, 
\[
\hat x = \argmax\limits_{x\in \{-1, 1\}^n} x^\top S x, 
\]
with indefinite matrix $S$. The SDP relaxation for this problem is given by \cite{goemans1995improved,Nesterov}:
\begin{gather}
 \min_{X \in \bS^+_n} \tr SX, \quad \text{s.t.: } X_{ii} = 1,\label{eq:lin_sdp} 
\end{gather}
which also covers the Ising model in statistical physics and a number of combinatorial optimization problems. We believe that the approach can be extended to a general setup given by Eq.~\eqref{eq:sdp}. 

An entropy-penalized SDP relaxation of \eqref{eq:lin_sdp}  has the form 
\begin{align}
 \min_{X \in \bS^+_n} & \tr V^\top S V + R_{\lambda}(V), \quad \text{s.t.: } \|V^i\|_2^2 = 1,\label{eq:lin_sdp_rel}\tag{EP-SDP}
\end{align}
where $V^i$ is the $i$-th column of matrix $V \in \bR^{n\times k}$, $X = V V^\top$. 

\subsection{Numerical Method. } To solve Problem~\eqref{eq:lin_sdp_rel}, we use the Augmented Lagrangian method starting from a sufficiently small value of the penalty parameter $\lambda > 0$ and increasing it in geometric progression, $\lambda_{k+1} = \lambda_{k} \gamma$, with $\gamma > 1$, as summarized in Algorithm \ref{algo:01}. 
The efficiency of the method is due to the efficient computability of gradients of Tsallis, Renyi, and von~Neumann entropies:

\begin{algorithm}[]
 \SetAlgoLined
 \label{algo:01}
    \KwData{Quadratic matrix $S$ of the MAP inference problem, staring point $\lambda_0$, $\gamma > 1$, step size policy $\{\eta_k\}_{k\ge 1}$ accuracy parameters $\varepsilon$, $\epsilon$}
    \KwResult{Solution $V_*$ as a local minimum of \eqref{eq:lin_sdp_rel} of unit rank}
    \Begin{
        $V_0 \leftarrow$ random initialization in $\R^{n\times k}$\;
        \While{$\tr (V_t^\top V_t) - \lambda_{\max} V_t > \varepsilon$}{
        Find local minimum of \ref{eq:lin_sdp_rel}\!$(S, \lambda_t)$ starting from $V_{t-1}$, assign it to $V_t$\;
        \While{$\nabla (\tr V^\top S V + R_{\lambda}(V)) \le \epsilon$}{
        $V = V - \eta_k \nabla (\tr V^\top S V + R_{\lambda}(V))/\|\nabla (\tr V^\top S V + R_{\lambda}(V))\|_2$\;
        $V_i \leftarrow V_i/\|V_i\|_2$ for each row $V_i$\;
        }
        $\lambda_{t+1} = \lambda_t\cdot \gamma$\;
        }
    }
    {\bf Return:} first singular vector of $V_t$.\;
 \caption{Entropy-Penalized SDP.}%
\end{algorithm}

\begin{lemma}
For any matrix $V \in \bR^{n\times k}$ with $k = {\mathcal O}(1)$, let $X(V) = V^\top V$. Then, gradients of $\ent_\alpha^T(X)$, $\ent^R(X)$, and $\ent^N(X)$ can be computed in $\mathcal{O}(n)$ time. Moreover, if the number of non-zero elements in matrix $A$ is $\mathcal{O}(n)$, then the iteration complexity of Algorithm \ref{algo:01} is $\mathcal{O}(n)$. 
\end{lemma}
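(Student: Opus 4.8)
The plan is to avoid ever forming an $n \times n$ object by pushing all spectral computations onto the tiny matrix $M \doteq V^\top V \in \bR^{k \times k}$ and then transporting the gradient back to $V$. The enabling fact is that $VV^\top$ and $V^\top V$ have identical nonzero spectra, and each of $\ent^T_\alpha$, $\ent^R$, $\ent^N$ is a symmetric function of the nonzero eigenvalues only (using the conventions $0\log 0 = 0^\alpha = 0$). Consequently $\ent(VV^\top) = \ent(M)$ as scalar functions of $V$, so it is enough to evaluate and differentiate the entropy of the $k \times k$ matrix $M$.

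First I would form $M = V^\top V$ in $\mathcal{O}(nk^2) = \mathcal{O}(n)$ operations (since $k = \mathcal{O}(1)$) and take its eigendecomposition in $\mathcal{O}(k^3) = \mathcal{O}(1)$ time. From the eigenvalues of $M$ every spectral quantity needed --- the scalars $\tr M$ and $\tr M^\alpha$, the matrix powers $M^{\alpha-1}$ for Tsallis and Renyi, and the matrix logarithm $\log M$ for von Neumann --- is available in $\mathcal{O}(1)$ time. Next I would assemble $G \doteq \partial\,\ent(M)/\partial M \in \bR^{k\times k}$, which is symmetric and has a closed form in these spectral data; e.g. for Tsallis
\[
G = \frac{\alpha}{1-\alpha}\left( \frac{M^{\alpha-1}}{(\tr M)^{\alpha}} - \frac{\tr M^\alpha}{(\tr M)^{\alpha+1}}\, I \right),
\]
with analogous expressions (involving $M^{\alpha-1}$, respectively $\log M + I$) for Renyi and von Neumann. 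Since $M = V^\top V$, the chain rule gives $\nabla_V \ent = V(G + G^\top) = 2VG$. Forming $G$ is $\mathcal{O}(1)$ and the product $2VG$ of an $n\times k$ by $k\times k$ matrix is $\mathcal{O}(nk^2) = \mathcal{O}(n)$, which establishes the first claim.

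For the per-iteration cost of Algorithm~\ref{algo:01}, note that the gradient of the data term $\tr(V^\top S V)$ is $2SV$; when $S$ carries $\mathcal{O}(n)$ nonzeros this sparse-times-dense product costs $\mathcal{O}(\mathrm{nnz}(S)\,k) = \mathcal{O}(n)$. Combining it with the $\mathcal{O}(n)$ penalty gradient, the normalized descent step, and the row rescaling $V_i \leftarrow V_i/\|V_i\|_2$ (each $\mathcal{O}(nk)=\mathcal{O}(n)$) leaves every inner iteration at $\mathcal{O}(n)$. I expect the only delicate point to be justifying the spectral reduction rigorously --- that the derivatives of all three entropies genuinely factor through $M$, so that no $n\times n$ matrix is ever materialized; once the spectral identity between $VV^\top$ and $V^\top V$ and the symmetry of $G$ are in hand, the remaining steps are routine matrix calculus and bookkeeping.
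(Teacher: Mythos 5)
Your proof is correct and rests on the same underlying idea as the paper's --- exploit the rank-$k$ structure so that all spectral work is done on a $k$-dimensional object and only one $n\times k$ by $k\times k$ product touches the large dimension --- but the execution differs enough to be worth noting. The paper first writes the gradient of each entropy of the $n\times n$ matrix $X=VV^\top$ with respect to $V$ in closed form (e.g.\ $\frac{\alpha}{1-\alpha}\bigl(\frac{X^{\alpha-1}}{(\tr X)^{\alpha}}-\frac{\tr X^{\alpha}}{(\tr X)^{\alpha+1}}I\bigr)V$ for Tsallis), and then shows that each ingredient ($X^{\alpha-1}V=U_1D^{2\alpha-1}U_2$, $\tr X^{\alpha}=\tr D^{2\alpha}$, $\tr X=\tr D^2$) is available from the thin SVD of $V$ in $\mathcal{O}(nk^2)$ time. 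You instead observe that all three entropies are symmetric functions of the nonzero spectrum, hence factor through the Gram matrix $M=V^\top V$, and you differentiate through $M$ via the chain rule $\nabla_V\ent=V(G+G^\top)=2VG$; this never materializes any $n\times n$ formula and confines the delicate objects (matrix powers, $\log M$) to a $k\times k$ block, which is arguably cleaner --- for instance, the paper's von Neumann gradient as written involves $\log(X/\tr X)$ for a singular $n\times n$ matrix, an issue your reduction sidesteps. The treatment of the second claim (sparse $S$ times dense $V$ in $\mathcal{O}(\mathrm{nnz}(S)\,k)$ time) is the same in both. One shared caveat, not a gap specific to you: both arguments implicitly assume differentiability of the entropy at the current spectrum, which fails at zero eigenvalues for $\alpha<1$ and for the von Neumann case ($\lambda\log\lambda$ has unbounded derivative at $0$); the paper only gestures at this with its parenthetical remark about $\partial\lambda_i={\bf v}_i^\top\partial X\,{\bf v}_i$, so if you want your version to be fully rigorous you should either restrict to full-column-rank $V$ (so $M\succ 0$) or state the same convention.
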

\begin{proof}
We start our analysis with Tsallis entropy. First, compute the gradient of $\ent^T_\alpha$ in $V$:
\[
\frac{\partial \ent_\alpha^T(X)}{\partial V} = \frac{\alpha}{1-\alpha}\left(\frac{X^{\alpha-1}}{(\tr{X})^{\alpha}} - \frac{\tr{X^\alpha} }{(\tr{X})^{\alpha + 1}} I \right)V.
\] 

Similarly for Renyi, $\ent^R(X)$, and von Neumann, $\ent^N(X)$, entropies we have
\[
\frac{\partial \ent_\alpha^R(X)}{\partial V} = \frac{\alpha}{1-\alpha}\frac{X^{\alpha-1}}{\tr{X^{\alpha}}} \left( I - \frac{X}{\tr{X}} \right) V 
\]
and
\[
\frac{\partial \ent_\alpha^N(X)}{\partial V} = \frac{\tr{X} I - X}{(\tr{X})^2} \left( I + \log{\frac{X}{\tr{X}}} \right)V.
\]
Following \cite{Holmes}, the singular-value decomposition of matrix $V = U_1 D U_2$ with $U_1 \in \R^{n\times n}$, $D\in \R^{n\times k}$, and $U_2 \in \R^{k\times k}$ can be performed in $\mathcal O(\min{(nk^2, n^2k)}) = \mathcal O(nk^2)$ time. 

For any $\alpha > 1$, the product $X^{\alpha-1}\cdot V = U_1 D^{2\alpha - 1} U_2$ can be computed in time $\mathcal O(n)$ together with $\tr{X^\alpha} = \tr{D^{2\alpha}}$ and $\tr{X} = \tr{D^2}$. Thus, for a fixed $k$,  the gradient $\frac{\partial \ent^T(X)}{\partial V}$ computation time is linear in its dimension. (Here, for any $\alpha \in (0, 1)$, we use the identity $\partial \lambda_i = {\bf  v_i}^\top \partial X {\bf v_i}$.) To finish the proof of the statement, it remains to note that matrix-vector multiplication takes $\mathcal{O}(n)$ time for any matrix with $\mathcal{O}(n)$ non-zero entries. 
\end{proof}

\section{Case Study}\label{sec:case_study}

In this section, we compare our penalized algorithm with other conventional approaches to MAP problems. We fix the width of factorization to $k = 10$, since there is no significant gain in practice for larger values of $k$, cf.  \cite{Montanari}. We choose $2\eta_k \beta = 1$, where $\beta$ is the Lipschitz constant of the gradient in $\ell_2$ norm and $\gamma = 3/2$. Parameters $\lambda_0$ and $\gamma$ of Algorithm~\ref{algo:01} are usually chosen by a few iterations of random search. It is usually enough to have about 35 iterations for penalty updates and a few hundred iterations to find a local minimum using Algorithm~\ref{algo:01}. We emphasize that matrices we obtain by solving \ref{eq:lin_sdp_rel} are rank-one solutions on all MAP instances presented. Thus, we do not need any further rounding procedure.

First, in Table~1, we show the performance of our algorithm on selected hard MAP inference problems from the BiqMac collection\footnote{http://biqmac.uni-klu.ac.at/biqmaclib.html}. We selected a few of the hardest instances ("gkaif" among them)---dense quadratic binary problems of 500 variables. 

\begin{table}[!t]
\label{tbl:new}
\centering
\begin{adjustbox}{width=\columnwidth,center}
\begin{tabular}{l|c|c|c|c|c}
Instance & gka1f  & gka2f & gka3f & gka4f & gka5f\\
\hline\hline
\multicolumn{6}{c}{SDP}\\
\hline\hline 
objective &  59426 &  97809   &   1347603 & 168616 & 185090  \\
upper bound &66783 & 109826 & 152758 & out of & out of \\
time [s] & 669 & 673 &592 & memory & memory\\
\hline\hline
\multicolumn{6}{c}{EP-SDP}\\
\hline\hline
objective & 60840 &  {\bf 99268}   &  {\bf 136567} & {\bf 170669} & {\bf 189762}  \\
upper bound & n/a & n/a & n/a & n/a & n/a \\
time [s] &   3.3 &  5.0  &  5.3 & 5.2 & 5.7  \\ 
\hline\hline
\multicolumn{6}{c}{Gurobi}\\
\hline\hline
objective &  {\bf 64678} &  97594  & 131898 & 162875 & 189324  \\
upper bound &   73267 &  112223  &  153726 & 190073 & 218428 \\ 
time [s]. &   70 &  70  & 71 & 70 & 70  \\
\end{tabular}
\end{adjustbox}
\caption{Results for the BiqMac collection.}
\end{table}

We compared our algorithm (EP-SDP with Tsallis entropy and $\alpha=2$) with the plain-vanilla semidefinite programming instance solved by the interior-point method, possibly with rounding using the best of one thousand roundings of \cite{goemans1995improved} and also with Gurobi solver for Mixed-Integer Problems. To avoid any confusion, we solve the corresponding maximization problems; by the objective value, we mean the value at a feasible solution produced by the method (e.g., rounded solution of SDP relaxation), which is a lower bound for the corresponding problem.
Because these problems are of the same size (but varying density), the running time of each method is almost constant. It took around 10 minutes for CVXPY to solve the SDP relaxation, and it runs out of memory for the two problems with higher density. Within five seconds, EP-SDP obtains results that are better than what Gurobi can produce in 70 seconds. 

\begin{table}[!th]
\centering
\begin{tabular}{l|c|c|c|c}
& \multicolumn{4}{c}{GSET Instance}\\
\hline\hline
& 1 & 2 & 3& 4 \\
\hline
EP-SDP\\(T, $\alpha=2.0$)\!\!& 11485 & 11469 & 11429 & 11442  \\ 
(T, $\alpha=1.1$)\!\!& 11454 & 11463 & 11444 &  11508  \\ 
(R, $\alpha=5$)\!& 11508 & {\bf 11519} & 11496 & {\bf 11531}   \\ 
(R, $\alpha=10$) & {\bf 11520} & 11420 & 11523 & 11523    \\
SDP & 11372 & 11363 & 11279 & 11355 \\ 
Loopy BP & 10210 & 10687 & 10415 & 10389 \\
Mean-Field & 11493 & 11515 & {\bf 11525} & 11512\\
\hline\hline
& \multicolumn{4}{c}{GSET Instance}\\
\hline\hline
& 5 & 6 & 7& 8 \\
\hline
EP-SDP\\(T, $\alpha=2$)& 11427 & 2059 & 1888 & 1866\\ 
(T, $\alpha=1.1$)  & 11506 & 2075 & 1858 &  1895\\ 
(R, $\alpha=5$)  & 11527 & {\bf 2127} & {\bf 1942}  & 1954\\ 
(R, $\alpha=10$) & {\bf 11538} & 2112 & 1940  & {\bf 1958}\\
SDP  & 11313 &  1945 &  1728  & 1727\\ 
Loopy BP & 10143 & 1076 & 964 &  731\\
Mean-Field  & 11528 & 2096 & 1906 & 1912\\
\hline\hline
\end{tabular}
\begin{tabular}{l|c|c|c|c|c}
& \multicolumn{5}{c}{GSET Instance}\\
\hline\hline
& 9 & 10& 11& 12 & 13 \\
\hline
EP-SDP\\(T, $\alpha=2$) & 1933 & 1882 & 532 &  530 & 560 \\ 
(T, $\alpha=1.1$)   & 1969 & 1861 & 544 &  536 & {\bf 568} \\ 
(R, $\alpha=5$)  & 1992 & 1960 & {\bf 550} & {\bf 548} & {\bf 568} \\
(R, $\alpha=10$)  & {\bf 2006} & {\bf 1982}  & 544 & 546 & 564 \\
SDP  & 1767 & 1784 & 524  & 514  &  540 \\ 
Loopy BP & 1021 & 820 & 424 & 412 & 482 \\ 
Mean-Field  & 1940 & 1902 & 542 & 538 & 564\\
\end{tabular}
\label{tbl:02}
\caption{Results for the GSET collection.}
\end{table}

In  our study, parameter $\alpha$ of entropies $\mathcal{S}^T_\alpha$, $\mathcal{S}^N_\alpha$, and $\mathcal{S}^R_\alpha$ is chosen on an exponential grid from 1 to 10 with a step 1.1. After experimentation, we note that $\alpha = 1.1$ and $\alpha = 5.0$ seem to improve the results the best for the low-rank SDP with Tsallis and Renyi entropies, respectively, although the difference between different $\alpha \in (1,10)$ is not very significant for either of the (Tsallis and Renyi) entropies. 

Table~2 summarizes the results of solving the Max-Cut problem over a GSET collection of sparse graphs\footnote{https://sparse.tamu.edu/Gset}. As we see from the experiments, the results of applying suitable entropy often  outperform both the plain-vanilla SDP with the classical Goermans-Williamson rounding, the mean-field approximation, as well as the results of UGM solver\footnote{https://www.cs.ubc.ca/~schmidtm/Software/UGM.html} for loopy belief propagation and mean-field inference. 
It is worth noting that for several instances of the GSET graph collection, loopy belief propagation provides rather weak results. 
Usually, strong results of the loopy belief propagation are complementary to those of the mean-field approximation, which is supported by our empirical results. Results of both loopy belief propagation and mean-field approximation can be substantially improved using the linear-programming belief-propagation approach (LP-BP). 

\begin{figure}[t!]
\centering
      \begin{tikzpicture}[scale=0.73]
    \begin{axis}[legend pos=north east, title={},xlabel=power of $\gamma$, ylabel=second singular value of 
    $V$]
         \addplot table [x=iter, y=lambda2-g2, col sep=comma] {decreasing-rank.txt};
	  \addlegendentry{$\alpha=2$ and $\gamma = 2$}
         \addplot table [x=iter, y=g1.5, col sep=comma] {decreasing-rank.txt};
         \addlegendentry{\hspace{2.5mm}$\alpha=2$ and $\gamma = 1.5$}
         \addplot table [x=iter, y=alpha5-g2, col sep=comma] {decreasing-rank.txt};
         \addlegendentry{$\alpha=5$ and $\gamma = 2$}
    \end{axis}
    \end{tikzpicture}
\caption{Rank decrement.}\label{rankdef}
\end{figure}
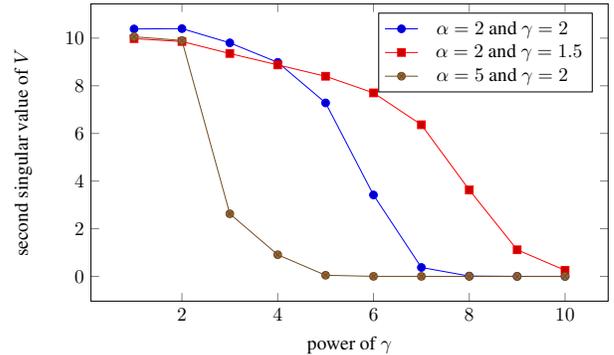

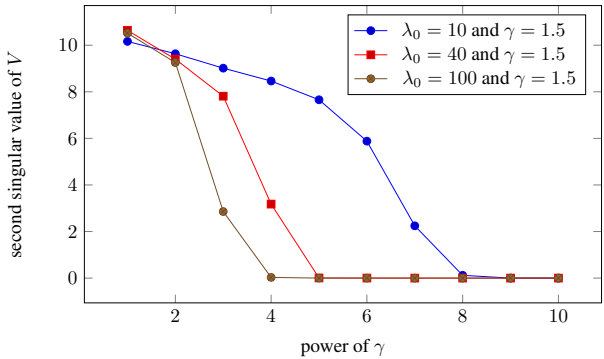
\begin{figure}[t!]
\centering
      \begin{tikzpicture}[scale=0.73]
    \begin{axis}[legend pos=north east, title={},xlabel=power of $\gamma$, ylabel=second singular value of 
     $V$]
         \addplot table [x=iter, y=10, col sep=comma] {inc-lambda.txt};
	  \addlegendentry{$\lambda_0=10$ and $\gamma = 1.5$}
         \addplot table [x=iter, y=40, col sep=comma] {inc-lambda.txt};
         \addlegendentry{$\lambda_0=40$ and $\gamma = 1.5$}
         \addplot table [x=iter, y=100, col sep=comma] {inc-lambda.txt};
         \addlegendentry{\hspace{2mm}$\lambda_0=100$ and $\gamma = 1.5$}
    \end{axis}
    \end{tikzpicture}
\caption{Rank decrement with multistart.}\label{rankdef2}
\end{figure}

We also want to point out that our iterative algorithm successfully decreases the rank of the solution. The higher the penalization parameter, the lower the rank. We illustrate this in Figure~\ref{rankdef}, where for Tsallis entropies with $\alpha = 2$ and $\alpha = 5$, we plot the second singular value of  matrix $V$. For this plot, we considered the Max-Cut problem for the first graph from the GSET collection and the Tsallis entropy as the penalization function. In Figure~\ref{rankdef2}, we illustrate the same concept for fixed penalization (Tsallis entropy with $\alpha = 2$) and different initial values of the multiplier $\lambda$. We observe that for different penalization functions and update schemes, the rank of the solution decreases gradually with each step. In practice, our iterative algorithm could be seen as a universal rounding procedure for SDP relaxations. Indeed, if we choose a large-enough penalization update (e.g., $\gamma = 2$ as in Figure 1), we easily obtain a rank-one solution that is not worse and often is substantially better than solutions obtained by randomized rounding.

Overall, we would like to stress that Algorithm~\ref{algo:01} is very fast. This is shown in Figure~3 and Table~3, where we compare run times of EP-SDP, low-rank Burer-Monteiro approach (LR-SDP), and interior-point method solvers (SDP) for various Erdos-Renyi random graphs. From the data, we see that (assuming the fixed width of factorization $k= 10$) EP-SDP run time increases linearly with the number of vertices. Indeed, throughout the benchmark instances tested, the run time does not exceed a few seconds per each of the test cases. At the same time, the bound is often  almost as good as that of the Branch and Bound Biq-Mac Solver of \cite{krislock2014improved},  which requires a significant amount of time. 

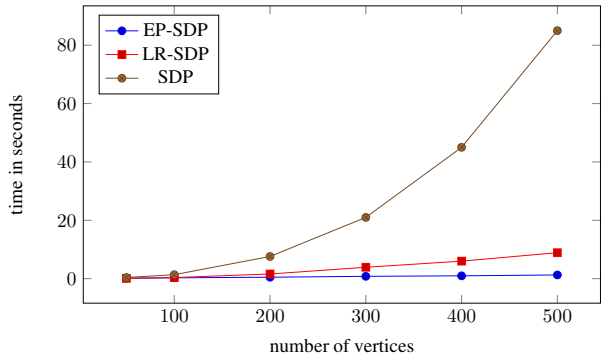
\begin{figure}[t!]
\centering
      \begin{tikzpicture}[scale=0.73]
    \begin{axis}[legend pos=north west, title={},xlabel=number of vertices, ylabel=time in seconds]
         \addplot table [x=vert, y=EPSDP, col sep=comma] {time.txt};
	  \addlegendentry{EP-SDP}
         \addplot table [x=vert, y=SDPLR, col sep=comma] {time.txt};
         \addlegendentry{LR-SDP}
         \addplot table [x=vert, y=SDP, col sep=comma] {time.txt};
         \addlegendentry{SDP}
    \end{axis}
    \end{tikzpicture} 
\caption{Time complexity}\label{time}
\end{figure}

\begin{table}[!t]
\label{tbl:time}
\centering
\begin{tabular}{l|c|c|c}
Instance &  {\bf EP-SDP} & {\bf LR-SDP} & {\bf SDP}\\
\hline\hline
E-R(50, 0.2) & 0.2s &  0.1s   &   0.4s  \\
\hline
G(100, 0.2) & 0.3s &  0.4s   &  1.4s  \\
\hline
G(200, 0.2) &  0.5s &  1.6s   &   7.6s  \\
\hline
G(300, 0.2) &  0.8s &  3.9s   &   21.0s  \\
\hline
G(400, 0.2) &  1.0s &  6.0s   &   45.0s  \\
\hline
G(500, 0.2) &  1.3s &  8.9s   &   85.0s  \\
\end{tabular}
\caption{Run time for random graphs.}
\end{table}

\section{Conclusions}\label{sec:conclusion}

This paper presented a unified view of the penalty functions used in low-rank semidefinite programming using entropy as a penalty. 
This makes it possible to find a low-rank optimum, where there are optima of multiple ranks.
Semidefinite programs with an entropy penalty can be solved efficiently using first-order optimization methods with linear-time per-iteration complexity, which makes them applicable to large-scale problems that appear in machine learning and polynomial optimization. 
Our case study illustrated the practical efficiency on binary MAP inference problems. 
The next step in this direction is to consider the structure of the SDP, which seems to be  crucial for further scalability. 

\section*{Acknowledgements} 
The work of Martin Tak{\' a}{\v c} was partially supported by the U.S. National Science Foundation, under award numbers NSF:CCF:1618717, NSF:CMMI:1663256, and NSF:CCF:1740796.
The work at LANL was supported by the U.S. Department of Energy through the Los Alamos National Laboratory as part of the GMLC ``Emergency Monitoring and Control through new Technologies and Analytics'' project and multiple LANL LDRD projects. Los Alamos National Laboratory is operated by Triad National Security, LLC, for the National Nuclear Security Administration of U.S. Department of Energy (Contract No. 89233218CNA000001).

\bibliographystyle{named}
\bibliography{biblio}

\end{document}